\DeclareMathOperator*{\subjectto}{subject\ to}
\DeclareMathAlphabet\mathbfcal{OMS}{cmsy}{b}{n}
\newtheorem{theorem}{Theorem}
\newtheorem{mydef}{Definition}
\newtheorem{myrem}{Remark}
\newtheorem{myprs}{Proposition}
\newcommand{\reducewordspace}[0]{\fontdimen2\font=0.645ex}
\newcommand\barbelow[1]{\stackunder[1.2pt]{$#1$}{\rule{.8ex}{.075ex}}}
\newcommand{\mat}[1]{\boldsymbol{#1}}
\newcommand{\bmat}[1]{\begin{bmatrix} #1 \end{bmatrix}}
\providecommand{\mA}{\ensuremath{\mat{A}}}
\providecommand{\mB}{\ensuremath{\mat{B}}}
\providecommand{\mG}{\ensuremath{\mat{G}}}
\newcommand{\m}{\boldsymbol}
\newcommand{\mbb}[1]{\mathbb{#1}}
\DeclarePairedDelimiter\abs{\lvert}{\rvert}%
\DeclarePairedDelimiter\norm{\lVert}{\rVert}%
\let\oldabs\abs
\def\abs{\@ifstar{\oldabs}{\oldabs*}}
\let\oldnorm\norm
\def\norm{\@ifstar{\oldnorm}{\oldnorm*}}
\title{\Large \vspace{0.4cm} \LARGE \centering {\textbf{New Insights on One-Sided Lipschitz and Quadratically-Inner Bounded Nonlinear Dynamic Systems}}}
\author{Sebastian A. Nugroh$\text{o}^{\dagger}$, Vu Hoan$\text{g}^{\ddagger}$, Maria Rados$\text{z}^{\ddagger}$, Shen Wan$\text{g}^{\dagger}$, and Ahmad F. Tah$\text{a}^{\dagger}$
	\thanks{$^{\dagger}$Department of Electrical and Computer Engineering, The University of Texas at San Antonio, One UTSA Circle, San Antonio, TX 78249. Emails: sebastian.nugroho@my.utsa.edu, mvy292@my.utsa.edu, ahmad.taha@utsa.edu}  
	\thanks{$^{\ddagger}$Department of Mathematics, The University of Texas at San Antonio, One UTSA Circle, San Antonio, TX 78249. Emails: duynguyenvu.hoang@utsa.edu, maria\_radosz@hotmail.com.}
	\vspace*{-0.1cm}
		\thanks{This work is partially supported by Valero Energy Corporation and National Science Foundation (NSF) under Grant CMMI-1728629, CMMI-1917164, DMS-1614797, and DMS-1810687.}
	\vspace{-0.4cm}
}
\begin{document}
	
\setlength{\abovedisplayskip}{3.5pt}
\setlength{\belowdisplayskip}{3.5pt}
\setlength{\abovedisplayshortskip}{3.1pt}
\setlength{\belowdisplayshortskip}{3.1pt}
		
\newdimen\origiwspc%
\newdimen\origiwstr%
\origiwspc=\fontdimen2\font
\origiwstr=\fontdimen3\font

\fontdimen2\font=0.65ex

\maketitle

\begin{abstract}
Nonlinear dynamic systems can be classified into various classes depending on the modeled nonlinearity. These classes include Lipschitz, bounded Jacobian, one-sided Lipschitz (OSL), and quadratically inner-bounded (QIB). Such classes essentially yield bounding constants characterizing the nonlinearity. This is then used to design observers and controllers through Riccati equations or matrix inequalities. While analytical expressions for bounding constants of Lipschitz and bounded Jacobian nonlinearity are studied in the literature, OSL and QIB classes are not thoroughly analyzed---computationally or analytically. In short, this paper develops analytical expressions of OSL and QIB bounding constants. These expressions are posed as constrained maximization problems, which can be solved via various optimization algorithms. 
This paper also presents a novel insight particularly on QIB function set: any function that is QIB turns out to be also Lipschitz continuous.
\end{abstract}

\begin{IEEEkeywords}
	Nonlinear dynamic networks, Lipschitz continuous, one-sided Lipschitz, quadratically inner-bounded.
\end{IEEEkeywords}


\section{Introduction and Paper's Contribution}
In the past few decades, hundreds of control-theoretic studies have investigated designing observer/controller for nonlinear dynamic systems (NDS) which can generally be expressed as
\begin{align}\label{eq:gen_dynamic_systems_0}
\dot{\m x}(t) &= \m f(\m x, \m u),\;\; \;\; \m y(t) =\m h(\m x, \m u)
\end{align}
\noindent where $\m x\in \mathbfcal{X}\subset \mathbb{R}^n$ is the state, $\m u\in \mathbfcal{U}\subset \mathbb{R}^m$ is the input, $\m y\in\mathbb{R}^p$ is the output, and the mappings $\m f :\mathbb{R}^n\times \mathbb{R}^m\rightarrow \mathbb{R}^n$ and $\m h :\mathbb{R}^n\times \mathbb{R}^m\rightarrow \mathbb{R}^p$ represent nonlinearities in the NDS.
The majority of these observer/controller designs utilize either linear matrix inequalities (LMIs) formulations or algebraic Riccati equations~\cite{boyd1994linear,vanantwerp2000tutorial}. These designs almost always assume that the nonlinear function $\m f(\cdot)$ belongs to certain nonlinearity \textit{classes} or \textit{function sets}.
For example, observer designs for Lipschitz continuous nonlinearities have been developed in \cite{Phanomchoeng2010,ichalal2012observer,zemouche2006observer,Nugroho2019ITS,Nugroho2019TPS} while observer-based control and stabilization are proposed in studies \cite{Yadegar2018,zemouche2017robust,Ekramian2017}. 

Beyond the somewhat conservative Lipschitz assumption, the control-theoretic application of \textit{one-sided Lipschitz} (OSL) and \textit{quadratically inner-bounded} (QIB) function sets are introduced in \cite{Abbaszadeh2010}. 
In short, $\m f(\cdot)$ is Lipschitz, OSL, or QIB if the following conditions are satisfied

\vspace{-0.1cm}
\begin{align*}
\textbf{Lipschitz:} & \;\;\; \norm {\m f(\m x,\m u)-\m f(\hat{\m x},\m u)}_2 \leq  \gamma_l\norm {\m x - \hat{  \m x}}_2,\\
\textbf{OSL:} & \;\;\;  \left(\m f(\m x,\m u)-\m f(\hat{\m x},\m u)\right)^{\top}(\m x-\hat{\m x})\leq \gamma_s \norm{\m x - \hat{\m x}}_2^{2}, \\
\textbf{QIB:} & \;\;\; \norm {\m f(\m x,\m u)-\m f(\hat{\m x},\m u)}_2^{2}\leq  \gamma_{q1} \norm{\m x - \hat{\m x}}_2^{2} \\ &\;\;\;\;+\gamma_{q2}\left(\m f(\m x,\m u)-\m f(\hat{\m x},\m u)\right)^{\top}(\m x-\hat{\m x}),
\end{align*}
for a nonnegative Lipschitz constant $\gamma_l \in \mbb{R}^+$, OSL constant $\gamma_s \in \mbb{R}$,  QIB constants $\gamma_{q1}, \gamma_{q2} \in \mbb{R}$, where $(\m x, \m u),(\hat{  \m x}, \m u) \in \m \Omega$. To give examples in observer design, consider the following NDS and observer dynamics
\begin{align}
\dot{{\m x}}(t) &= \mA{  \m x} + \m f({\m x}, \m u) + \mB\m u(t) \\
\dot{\hat{\m x}}(t) &= \mA\hat{  \m x} + \m f(\hat{\m x}, \m u) + \mB\m u(t) + \m L\left(\m y(t) - \hat{  \m y}(t)\right), \label{eq:obs_dynamic_systems}
\end{align}
where $\m y(t) = \m C \m x(t)$, $\hat{  \m y}(t) = \m C \hat{  \m x}(t)$, and $ \m L$ is a Luenberger-like matrix variable. To guarantee stability of the estimation error dynamics, the observer design considering that $\m f(\cdot)$ is Lipschitz continuous translates to computing $\m P\succ 0$, $\m Y$, and $\epsilon>0$ such that \cite{Phanomchoeng2010}
	\begin{align}
\small \bmat{
	\m A ^{\top}\m P + \m P\m A - \m C ^{\top}\m Y ^{\top} -\m Y\m C  +\epsilon\gamma_l^2 \m I & * \\
	\m P & -\epsilon \m I} & \prec 0, \label{eq:LMI_lip_obs}
\end{align}
is feasible, where $\m L= \m P^{-1} \m Y$. If $\m f(\cdot)$ satisfies both OSL and QIB conditions, then the problem boils down to finding  $\m P\succ 0$, $\sigma,\epsilon_1,\epsilon_2> 0$ such that \cite{zhang2012full}
\begin{align}
\small \begin{bmatrix}
\m 	A^\top \bm P + \bm P\m A -\sigma \m C^\top \m C+ (\epsilon_1\gamma_s+\epsilon_2\gamma_{q1})\m I &* \\ 
\m P+\dfrac{\gamma_{q2}\epsilon_2-\epsilon_1}{2}\m I & -\epsilon_2 \m I  
\end{bmatrix} \prec 0, \label{eq:LMI_osl_qib_obs}
\end{align}
where the corresponding observer gain is $\m L = \frac{1}{2} \sigma \m P^{-1} \m C^{\top}$.
The Lipschitz constant $\gamma_l$ can only be nonnegative while OSL and QIB constants $\gamma_s$, $\gamma_{q1}$, and $\gamma_{q2}$ can at first sight be any real numbers. In particular, the authors in \cite{Abbaszadeh2010} demonstrate that in observer design, the OSL condition can be \textit{less conservative} compared to the Lipschitz condition. This stems from the fact that $\gamma_s$ can be negative. This in turn expands the feasibility region of the formulated matrix inequalities. 

Since then, many approaches have been developed in the literature to design observer/controller for NDS satisfying OSL and QIB. For instance, observer designs for these type of nonlinearities with various features are proposed in  \cite{benallouch2012observer,zhang2012full,zhang2015unknown}, while controller/observer-based stabilization methods are developed in \cite{wu2015observer,Rastegari2019,Gholami2019}. 

The aforementioned literature solely focuses on developing methods for observer/controller designs while assuming that the OSL and QIB constants are known. These studies demonstrate the applicability of the proposed approaches on small NDS where OSL and QIB constants can be obtained analytically. For large-scale NDS, computing these constants is nontrivial. To the best of our knowledge, there is an almost complete absence in the literature that is dedicated to \textit{parameterize} NDS---that is, computing the corresponding constants or \textit{parameters}---for OSL and QIB. This is in contrast to the more understood and studied Lipschitz nonlinearity; see \cite{SCHULZEDARUP2018135,Chakrabarty2019,Fazlyab2019,Nugroho2019}. These studies are presented for Lipschitz nonlinearity;  their extension to OSL and QIB function sets is unclear. 
To that end, the objective of this paper is to obtain analytical expressions of OSL and QIB constants. The contributions of our paper are as follows:
\begin{itemize}[leftmargin=*]
	\item We prove that any function satisfying the QIB condition is also Lipschitz continuous, which rectifies the current understanding of the QIB function set \cite{Abbaszadeh2010,Abbaszadeh2013,zhang2012full}. In addition, we derive an inequality constraint involving parameters $\gamma_{q_1}, \gamma_{q_2}$ that arises as a \textit{necessary} consequence of a function being QIB. This implies that $\gamma_{q_1}$  and $\gamma_{q_2}$ are constrained in order for the QIB condition to hold. Consequently, this contribution also results in correcting some unfortunate errors in the numerical examples produced in~\cite{Abbaszadeh2013,zhang2012full}. 
	\item We provide systematic methods to compute the corresponding constants/parameters for any NDS with arbitrary nonlinear models. The parameterization is procured through global, constrained maximization optimization problems. 
	\item We showcase the application of the proposed methods on a simple nonlinear system to compute its OSL and QIB parameters and utilize them for observer design. 
\end{itemize}
The paper organizations are summarized as follows.
The relation between Lipschitz continuity, OSL, and QIB, is discussed in Section \ref{sec:lip_osl_qib_relation}. 
In Section \ref{sec:nonlinear_classification}, we present the analytical parameterization of OSL and QIB function sets that both posed as global maximization problems. Section \ref{sec:interval_optimization} provides brief discussions on how to solve global, constrained maximization problems. Finally, Section \ref{sec:numerical_tests} provides a numerical example and Section \ref{sec:conclusion} concludes the paper.

\vspace{-0.0cm}
\noindent \textit{\textbf{Paper's Notation:}} 
For vectors $\m x, \m y\in\mbb{R}^n$, inner product is defined as $\langle\m x,\m y\rangle := \m x^{\top}\m y$. For matrix $\m A$, $A_{(i,j)}$ denotes its $i$-th and $j$-th element.  The set $\mathbb{I}(n)$ is defined as $\mathbb{I}(n) := \{i\in\mathbb{N}\,|\, 1\leq i \leq n\}$, which is usually used to represent the set of indices. The notation $\nabla_{\hspace{-0.05cm}x}$ defines the gradient vector with respect to vector $\m x$. For a square matrix $\mA$,  $\lambda_{\mathrm{max}}(\mA)$ and $\lambda_{\mathrm{min}}(\mA)$ return the maximum and minimum eigenvalues of $\mA$. 

%

\vspace{-0.1cm}
\section{Problem Formulation and Preliminaries\vspace{-0.05cm}}\label{sec:problem_formulation}
This paper focuses on parameterizing NDS \eqref{eq:gen_dynamic_systems} into OSL and QIB sets for the a general model of NDS:
\begin{align}\label{eq:gen_dynamic_systems}
\dot{\m x}(t) &= \mA \m x (t) +\mG\m f(\m x, \m u) + \mB\m u(t),\;\; \m y(t) = \m C \m x(t)
\end{align}
where $\m G\in\mbb{R}^{n_x\times n_g}$ describes the distribution of the nonlinearities.
 The set $\mathbf{\Omega}:= \mathbfcal{X}\times \mathbfcal{U}$ with $\mathbf{\Omega}\subset\mbb{R}^p$ is used throughout the paper to represent the domain of $\m f(\cdot)$, which is convex, compact. Moreover, we assume that $\mathbfcal{X}$ has a nonempty interior. 
This assumption is not restrictive as many dynamic systems in real world have bounded states and inputs---each upper and lower bounds define the operating regions. 
It is additionally assumed here that $\m f(\cdot)$ is differentiable and has continuous partial derivatives everywhere in $\mathbf{\Omega}$.
This ensures that its gradient vector $\nabla\m f(\cdot)$ is bounded within the set $\mathbf{\Omega}$ \cite{trench2003introduction}.  
As mentioned previously, in this paper we are seeking a way of computing constants such that $\m f(\cdot)$ satisfies OSL and QIB conditions---defined as follows.
\vspace{-0.1cm}
\begin{mydef}[OSL \& QIB]\label{def:osl_qib}
	\reducewordspace
The nonlinear function $\m f :\mathbb{R}^n\times \mathbb{R}^m\rightarrow \mathbb{R}^g$ in \eqref{eq:gen_dynamic_systems} is locally one-sided Lipschitz in $\mathbf{\Omega}$ if for any $(\m x, \m u), (\hat{\m x}, \m u)\in \mathbf{\Omega}$ it holds that
\begin{subequations}\label{eq:osl_qib_def}
	\vspace{-0.1cm}
	\begin{align}
	\langle \m G(\m f(\m x,\m u)-\m f(\hat{\m x},\m u)),\m x-\hat{\m x}\rangle\leq \gamma_s \norm{\m x - \hat{\m x}}_2^{2}, \label{eq:osl_def}
	\end{align}
	for $\gamma_s\in\mbb{R}$ and quadratically inner-bounded in $\mathbf{\Omega}$ if for any $(\m x, \m u), (\hat{\m x}, \m u)\in \mathbf{\Omega}$ it holds that
	\begin{align}
	&\langle\m G(\m f(\m x,\m u)-\m f(\hat{\m x},\m u)),\m G(\m f(\m x,\m u)-\m f(\hat{\m x},\m u))\rangle \leq \label{eq:qib_def}\\ &\qquad\gamma_{q1} \norm{\m x - \hat{\m x}}_2^{2} +\gamma_{q2}\langle\m G(\m f(\m x,\m u)-\m f(\hat{\m x},\m u)),\m x-\hat{\m x}\rangle, \nonumber
	\end{align}
for $\gamma_{q1},\gamma_{q2} \in \mbb{R}$. 
\end{subequations}	
\end{mydef}
\vspace{-0.1cm}
It is worthwhile mentioning that, since $(\m x,\m u)\in\mathbf{\Omega}$, the OSL and QIB conditions are both valid \textit{locally} (or \textit{semi-globally}) in the region of interest $\mathbf{\Omega}$.


\vspace{-0.0cm}
\section{The Relation Between Lipschitz, OSL, and QIB}\label{sec:lip_osl_qib_relation}
This section discusses the relations between Lipschitz continuity, OSL, and QIB. Without loss of generality, it is assumed throughout this section that $\m G = \m I$;  $\m I$ is the identity matrix.


\subsection{Lipschitz Continuity and OSL: A Known Result}\label{ssec:lip_osl_relation}
Lipschitz continuity is an important class of NDS as it is central for guaranteeing the existence and uniqueness of ODE solutions. This class is defined as follows.
\vspace{-0.1cm}
\begin{mydef}\label{def:lip}
	\reducewordspace
	The nonlinear function $\m f :\mathbb{R}^n\times \mathbb{R}^m\rightarrow \mathbb{R}^g$ in \eqref{eq:gen_dynamic_systems} is locally Lipschitz continuous in $\mathbf{\Omega}$ if for any $(\m x, \m u), (\hat{\m x}, \m u)\in \mathbf{\Omega}$ it holds that
\begin{align}
\norm{\m f(\m x,\m u)-\m f(\hat{\m x},\m u)}_2 \leq \gamma_l \norm{\m x - \hat{\m x}}_2,\label{eq:lip_def}
\end{align}
where $\gamma_{l} \geq 0$ is called the corresponding Lipschitz constant.
\end{mydef}
\vspace{-0.1cm}
It is shown in \cite{Abbaszadeh2013} that by using Cauchy- Schwarz inequality, if $\m f(\cdot)$ is Lipschitz continuous with constant $\gamma_l$, then $\m f(\cdot)$ is also OSL with $\gamma_s = \gamma_l$, showing that Lipschitz continuity implies OSL. Nonetheless, the converse is not true, as OSL is a one-sided nonlinearity as seen in \eqref{eq:osl_def}, while on the other hand
Lipschitz continuity is a two-sided nonlinearity. 
Therefore, there are some functions that are OSL but not Lipschitz continuous. For example, the scalar function $f(x) = -\mathrm{sgn}(x)\sqrt{x}$ for all $x\in\mbb{R}$ is OSL with $\gamma_{s} = 0$ but not Lipschitz continuous \cite{Abbaszadeh2013}. Readers are referred to \cite{Abbaszadeh2013} for a more detailed discussion and examples on the relation between Lipschitz continuity and OSL.

\subsection{Lipschitz Continuity and QIB: A New Understanding}\label{ssec:lip_qib_relation}
The QIB condition is introduced in the control theoretic literature (for the first time as far as we know) in \cite{Abbaszadeh2010,Abbaszadeh2013} and is extensively used alongside with OSL for observer design purposes.  If $\m f(\cdot)$ is Lipschitz continuous with Lipschitz constant $\gamma_l$, then it follows directly from \eqref{eq:qib_def} that $\m f(\cdot)$ is also QIB with $\gamma_{q1} = \gamma_l^2$ and $\gamma_{q2} = 0$ \cite{Abbaszadeh2013}. 

The question arises if a function satisfying the QIB condition is also necessarily Lipschitz continuous. It is claimed in \cite{Abbaszadeh2013} that this is not true in general and it is remarked  that if $\m f(\cdot)$ is OSL and QIB with $\gamma_{q2}$ positive, then $\m f(\cdot)$ is Lipschitz continuous.
 
On that regard, a further investigation shows that if a function $\m f(\cdot)$ is QIB then it \emph{is necessarily} Lipschitz continuous. This is shown in the following theorem.




\vspace{-0.1cm}
\begin{theorem}\label{thm:new_qib_lip}
Suppose that $\m f(\cdot)$  is QIB with constants $\gamma_{q1},\gamma_{q2}\in\mbb{R}$. Then $\m f(\cdot)$ is also Lipschitz continuous, where  $\gamma_{q1}, \gamma_{q2}$ necessarily satisfy the inequality
\begin{align}
2 \gamma_{q1}+|\gamma_{q2}|^2 \geq 0. \label{eq:qib_constants_condition}
\end{align}
\end{theorem}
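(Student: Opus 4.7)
The plan is to start from the QIB inequality and bound the cross term $\gamma_{q2}\langle \m G(\m f(\m x, \m u) - \m f(\hat{\m x}, \m u)), \m x - \hat{\m x}\rangle$ by something that is quadratic in $\|\m f(\m x,\m u) - \m f(\hat{\m x},\m u)\|_2$ and $\|\m x - \hat{\m x}\|_2$ separately. The natural tool is to combine Cauchy--Schwarz with Young's inequality. Concretely, I would denote $\bDelta_f := \m f(\m x,\m u) - \m f(\hat{\m x},\m u)$ and $\bDelta_x := \m x - \hat{\m x}$, and (following the convention of this subsection, $\m G = \m I$) write
\begin{equation*}
\gamma_{q2}\,\langle \bDelta_f,\bDelta_x\rangle \;\leq\; |\gamma_{q2}|\,\|\bDelta_f\|_2\,\|\bDelta_x\|_2 \;\leq\; \tfrac{1}{2}\|\bDelta_f\|_2^{2} + \tfrac{|\gamma_{q2}|^2}{2}\|\bDelta_x\|_2^{2},
\end{equation*}
where the first bound is Cauchy--Schwarz and the second is Young's inequality with the weight chosen so that the $\|\bDelta_f\|_2^{2}$ coefficient is $1/2$.

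Next, I would substitute this bound into the QIB condition \eqref{eq:qib_def}, yielding
\begin{equation*}
\|\bDelta_f\|_2^{2} \;\leq\; \gamma_{q1}\|\bDelta_x\|_2^{2} + \tfrac{1}{2}\|\bDelta_f\|_2^{2} + \tfrac{|\gamma_{q2}|^2}{2}\|\bDelta_x\|_2^{2},
\end{equation*}
and move $\tfrac{1}{2}\|\bDelta_f\|_2^{2}$ to the left to obtain $\|\bDelta_f\|_2^{2} \leq (2\gamma_{q1} + |\gamma_{q2}|^2)\|\bDelta_x\|_2^{2}$. This is precisely the Lipschitz bound \eqref{eq:lip_def} with $\gamma_l = \sqrt{2\gamma_{q1} + |\gamma_{q2}|^2}$, which simultaneously shows that $\m f(\cdot)$ is Lipschitz continuous and forces the necessary compatibility condition \eqref{eq:qib_constants_condition}: if $2\gamma_{q1} + |\gamma_{q2}|^2$ were negative, the inequality would require $\|\bDelta_f\|_2 = 0$ for every pair with $\|\bDelta_x\|_2 > 0$, which is incompatible with any non-constant $\m f(\cdot)$, so no meaningful (tightest) pair $(\gamma_{q1},\gamma_{q2})$ can violate \eqref{eq:qib_constants_condition}.

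The only subtle point, and what I expect to be the main conceptual obstacle, is the logical status of the ``necessarily'' claim: strictly speaking, a constant function $\m f$ vacuously satisfies QIB for every $(\gamma_{q1},\gamma_{q2})$, including pairs that violate \eqref{eq:qib_constants_condition}. I would therefore phrase the necessity in the nontrivial way above, namely that whenever there exist admissible pairs $(\m x,\m u),(\hat{\m x},\m u)\in\mathbf{\Omega}$ with $\m f(\m x,\m u) \neq \m f(\hat{\m x},\m u)$ (which is the only case of practical interest for observer design and is guaranteed by the nonempty interior of $\mathbfcal{X}$ together with $\m f$ being non-constant), the displayed chain forces $2\gamma_{q1}+|\gamma_{q2}|^2 \geq 0$. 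The rest of the argument is just clean bookkeeping of the two classical inequalities.
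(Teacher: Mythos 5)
Your proof is correct and follows essentially the same route as the paper's: Cauchy--Schwarz followed by Young's inequality with weight $\tfrac{1}{2}$ on the cross term, substitution into the QIB condition, and rearrangement to obtain $\|\bDelta_f\|_2^2 \leq (2\gamma_{q1}+|\gamma_{q2}|^2)\|\bDelta_x\|_2^2$, from which both the Lipschitz bound and the necessity of \eqref{eq:qib_constants_condition} follow. Your closing caveat is unnecessary and its premise is slightly off: the necessity does not require $\m f$ to be non-constant, since picking any pair with $\m x \neq \hat{\m x}$ (guaranteed by the nonempty interior of $\mathbfcal{X}$) and dividing by $\|\bDelta_x\|_2^2 > 0$ leaves a nonnegative quantity bounded above by $2\gamma_{q1}+|\gamma_{q2}|^2$, which already forces \eqref{eq:qib_constants_condition} --- and in any case a constant $\m f$ does not satisfy QIB for arbitrary $\gamma_{q1}<0$, because the QIB condition then reduces to $0 \leq \gamma_{q1}\|\bDelta_x\|_2^2$.
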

\begin{proof}
First, recall Cauchy's inequality for real numbers $a,b\in\mbb{R}_{+}$, which is given as $ab \leq \tfrac{1}{2}a^2 + \tfrac{1}{2}b^2$. Suppose now that $\m f(\cdot)$ is QIB with constants $\gamma_{q1},\gamma_{q2}\in\mbb{R}$. Using Cauchy-Schwarz inequality and Cauchy's inequality mentioned previously, for $\m G = \m I$ we obtain the following results
	\begin{align}
	&\gamma_{q2}\langle\m f(\m x,\m u)-\m f(\hat{\m x},\m u),\m x-\hat{\m x}\rangle  \nonumber \\
	&\qquad \qquad \leq \abs {\gamma_{q2}\langle\m f(\m x,\m u)-\m f(\hat{\m x},\m u),\m x-\hat{\m x}\rangle} \nonumber \\
	&\qquad \qquad \leq \norm{\m f(\m x,\m u)-\m f(\hat{\m x},\m u)}_2\abs {\gamma_{q2}}\norm{\m x-\hat{\m x}}_2 \nonumber \\
	&\qquad \qquad  \leq\tfrac{1}{2}\norm{\m f(\m x,\m u)-\m f(\hat{\m x},\m u)}_2^2 + \tfrac{1}{2}\abs {\gamma_{q2}}^2\norm{\m x-\hat{\m x}}_2^2.\nonumber 
	\end{align}
Since $\m f(\cdot)$ is QIB, then from the above inequality and \eqref{eq:qib_def}, one can easily verify that
\begin{align}
&\norm {\m f(\m x,\m u)-\m f(\hat{\m x},\m u)}_2^{2}\nonumber \\ &\quad \leq\gamma_{q1} \norm{\m x - \hat{\m x}}_2^{2}+\gamma_{q2}\langle\m f(\m x,\m u)-\m f(\hat{\m x},\m u),\m x-\hat{\m x}\rangle\nonumber \\ 
&\quad\leq\gamma_{q1} \norm{\m x - \hat{\m x}}_2^{2} +\tfrac{1}{2}\norm{\m f(\m x,\m u)-\m f(\hat{\m x},\m u)}_2^2\nonumber \\  &\quad\quad+ \tfrac{1}{2}\abs {\gamma_{q2}}^2\norm{\m x-\hat{\m x}}_2^2,\nonumber
\end{align}
from which we can imply 
\begin{align}
\norm {\m f(\m x,\m u)-\m f(\hat{\m x},\m u)}_2^2 \leq \left(2\gamma_{q1} + \abs {\gamma_{q2}}^2\right)\norm{\m x-\hat{\m x}}_2^2.\label{eq:new_qib_lip}
\end{align}
Since $\mathbfcal{X}$ has nonempty interior, we can consider two points $(\m x,\m u)$ and $(\hat{\m x},\m u)$ such that $\m x\neq \hat{\m x}$. From \eqref{eq:new_qib_lip}, then it follows that
\begin{align*}
 \frac{\norm {\m f(\m x,\m u)-\m f(\hat{\m x},\m u)}_2^2}{\norm{\m x-\hat{\m x}}_2^2} \leq \left(2\gamma_{q1} + \abs {\gamma_{q2}}^2\right), 
\end{align*}
from which we deduce \eqref{eq:qib_constants_condition} by noting that the left-hand side is nonnegative. \eqref{eq:new_qib_lip} then implies
\begin{align}
\norm {\m f(\m x,\m u)-\m f(\hat{\m x},\m u)}_2 \leq \sqrt{2\gamma_{q1} + \abs {\gamma_{q2}}^2}\norm{\m x-\hat{\m x}}_2.
\end{align}
This completes the proof.
\end{proof}
\vspace{-0.1cm}

\vspace{-0.1cm}
	The above result shows that QIB implies Lipschitz continuity. Moreover since Lipschitz continuity implies QIB, we conclude that the class of  Lipschitz functions and QIB functions are the same. Moreover, the corresponding Lipschitz constant can be computed as $\gamma_l = \sqrt{2\gamma_{q1} + \abs {\gamma_{q2}}^2}$ from the QIB constants $\gamma_{q1}, \gamma_{q2}$. 
\begin{myrem}\label{rem:2}
	Despite QIB and Lipschitz continuity characterizing the same function sets, QIB can still be beneficial for less conservative observer design purpose. This is due to the fact that \textit{(a)} OSL is more general than Lipschitz and usually it is paired with QIB in observer design purpose and \textit{(b)} the QIB constants $\gamma_{q1,q2}$ can still be negative provided that \eqref{eq:qib_constants_condition} holds.
\end{myrem}

In the ensuing section, we shift our attention to NDS parameterization for OSL and QIB by posing the problem of bounding these constants as global maximization problems.  

\vspace{-0.0cm}
\section{NDS Parameterization\vspace{-0.05cm}}\label{sec:nonlinear_classification}
This section discusses our approach for parameterizing $\m f(\cdot)$ from NDS \eqref{eq:gen_dynamic_systems} into OSL and QIB as global maximization problems, in which the objective functions are given in closed-form expressions.

\vspace{-0.cm}
\subsection{One-Sided Lipschitz Parameterization}\label{ssec:osl}
Here we derive some numerical methods to compute OSL constant $\gamma_s$. To that end, first we propose numerical formulations that provide lower and upper bounds towards the left-hand side of one-sided Lipschitz condition given in \eqref{eq:osl_def}. 
\vspace{-0.1cm}
\begin{myprs}\label{prs:osl_bounds}
	\reducewordspace
	For the nonlinear function $\m f :\mathbb{R}^n\times \mathbb{R}^m\rightarrow \mathbb{R}^g$ in \eqref{eq:gen_dynamic_systems}, there exist $\bar{\gamma},\barbelow{\gamma}\in\mathbb{R}$ such that for $(\m x, \m u), (\hat{\m x}, \m u)\in \mathbf{\Omega}$ 
	\begin{subequations}\label{eq:one_sided_Lipschitz_bounds_theorem}
		\begin{align}
		\barbelow{\gamma} \norm{\m x - \hat{\m x}}_2^{2}\leq \langle\m G(\m f(\m x,\m u)-\m f(\hat{\m x},\m u)),\m x-\hat{\m x}\rangle\leq \bar{\gamma} \norm{\m x - \hat{\m x}}_2^{2}, \label{eq:one_sided_Lipschitz}
		\end{align}
		where $\bar{\gamma}$ and $\barbelow{\gamma}$ are given as
		\begin{align}
		\bar{\gamma} &= \max_{(\m{x},\m u)\in \mathbf{\Omega}}\lambda_{\mathrm{max}}\left(\frac{1}{2}\left(\mathbf{\Xi} (\m x,\m u) + \mathbf{\Xi}^{\top} (\m x,\m u)\right)\right) \label{eq:one_sided_Lipschitz_upper}\\
		\barbelow{\gamma} &= \min_{(\m{x},\m u)\in \mathbf{\Omega}}\lambda_{\mathrm{min}}\left(\frac{1}{2}\left(\mathbf{\Xi}(\m x,\m u) + \mathbf{\Xi}^{\top} (\m x,\m u)\right)\right),\label{eq:one_sided_Lipschitz_lower}
		\end{align}
		where each of the $i$-th and $j$-th element of $\mathbf{\Xi} (\cdot)$ is specified as
		\begin{align}
		{\Xi}_{(i,j)} (\m x,\m u) := \sum_{k\in \mbb{I}(g)}\hspace{-0.1cm}G_{(i,k)}\dfrac{\partial f_k}{\partial x_j}(\m x,\m u).\label{eq:one_sided_Lipschitz_matrix}
		\end{align}
	\end{subequations}
\end{myprs}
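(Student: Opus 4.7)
The plan is to exploit the structure of $\mathbf{\Xi}$ as a Jacobian and then reduce the one-sided Lipschitz quadratic form to a Rayleigh-quotient bound that is integrated along a line segment. The first observation I would make is that, by the chain rule, the matrix $\mathbf{\Xi}(\m x, \m u)$ defined in \eqref{eq:one_sided_Lipschitz_matrix} is precisely the Jacobian with respect to $\m x$ of the mapping $\m x \mapsto \m G\, \m f(\m x, \m u)$. Since $\m f$ is assumed to have continuous partial derivatives on $\mathbf{\Omega}$, the matrix-valued function $(\m x, \m u) \mapsto \mathbf{\Xi}(\m x, \m u)$ is continuous on $\mathbf{\Omega}$, and hence so is its symmetric part $\mathbf{\Xi}_s := \tfrac{1}{2}(\mathbf{\Xi} + \mathbf{\Xi}^{\top})$.

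The main step would be to pick any $(\m x, \m u),(\hat{\m x}, \m u) \in \mathbf{\Omega}$ and apply the fundamental theorem of calculus along the segment $\m x_t := \hat{\m x} + t(\m x - \hat{\m x})$. Convexity of $\mathbfcal{X}$ ensures $\m x_t \in \mathbfcal{X}$ for all $t \in [0,1]$, so the segment lies entirely in $\mathbf{\Omega}$ and the Jacobian is defined along it. This yields
\begin{equation*}
\m G\bigl(\m f(\m x, \m u) - \m f(\hat{\m x}, \m u)\bigr) = \int_0^1 \mathbf{\Xi}(\m x_t, \m u)\,(\m x - \hat{\m x})\, dt.
\end{equation*}
Taking the inner product with $\m x - \hat{\m x}$ and using the standard identity $v^{\top} M v = v^{\top} M_s v$ for the symmetric part $M_s$ of any square matrix $M$ gives
\begin{equation*}
\langle \m G(\m f(\m x, \m u) - \m f(\hat{\m x}, \m u)), \m x - \hat{\m x} \rangle = \int_0^1 (\m x - \hat{\m x})^{\top} \mathbf{\Xi}_s(\m x_t, \m u)\,(\m x - \hat{\m x})\, dt.
\end{equation*}
Applying the pointwise Rayleigh--Ritz inequalities then sandwiches the integrand between $\lambda_{\mathrm{min}}(\mathbf{\Xi}_s(\m x_t, \m u))\,\norm{\m x - \hat{\m x}}_2^{2}$ and $\lambda_{\mathrm{max}}(\mathbf{\Xi}_s(\m x_t, \m u))\,\norm{\m x - \hat{\m x}}_2^{2}$.

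To close the argument I would replace the integrand bounds by the global maximum and minimum of these extremal eigenvalues over $\mathbf{\Omega}$, pull $\norm{\m x - \hat{\m x}}_2^{2}$ outside the integral, and use $\int_0^1 dt = 1$ to obtain exactly the definitions \eqref{eq:one_sided_Lipschitz_upper}--\eqref{eq:one_sided_Lipschitz_lower} for $\bar{\gamma}$ and $\barbelow{\gamma}$. The only subtle point, and the closest thing to an obstacle, is justifying that the scalar maps $(\m x, \m u) \mapsto \lambda_{\mathrm{max}}(\mathbf{\Xi}_s(\m x, \m u))$ and $(\m x, \m u) \mapsto \lambda_{\mathrm{min}}(\mathbf{\Xi}_s(\m x, \m u))$ are continuous, so that compactness of $\mathbf{\Omega}$ guarantees the extrema are finite and attained. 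This is a standard consequence of Weyl-type perturbation bounds for symmetric matrices and can be invoked rather than reproved, so I do not expect any genuine difficulty beyond careful bookkeeping.
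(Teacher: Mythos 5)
Your argument is correct: identifying $\mathbf{\Xi}$ as the Jacobian of $\m x \mapsto \m G\,\m f(\m x,\m u)$, applying the fundamental theorem of calculus along the segment (valid by convexity of $\mathbf{\Omega}$ and continuity of the partials), symmetrizing the quadratic form, and sandwiching via Rayleigh--Ritz with the extrema attained by compactness and continuity of the eigenvalue maps is exactly the standard route for this result. The paper itself defers the proof to a companion reference, but your line-integral plus Rayleigh-quotient argument is the canonical one and has no gaps.
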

The proof of Proposition \ref{prs:osl_bounds} is available in \cite{nugroho2019tac}.
From this proposition, we deduce that $\gamma_s = \bar{\gamma}$. This result generalizes the approach to compute one-sided Lipschitz constant mentioned in \cite{Abbaszadeh2010} in two ways. Firstly, our result applies for a more general form of NDS expressed in \eqref{eq:gen_dynamic_systems} and secondly, we give a lower bound for the left-hand side of OSL condition presented in \eqref{eq:osl_def}, which can be useful for determining quadratically inner-bounded constants, as we see later in the next section. However, the non closed-form expression for $\bar{\gamma}$ described in \eqref{eq:one_sided_Lipschitz_upper} and similarly in \cite{Abbaszadeh2010} makes it difficult to compute $\gamma_s$ via deterministic global optimization methods. Motivated by this limitation, we present several solutions to this problem, first of which is presented below---see \cite{nugroho2019tac} for the proof.
\vspace{-0.1cm}
\begin{theorem}\label{cor:osl_bounds}
	\reducewordspace
	The nonlinear function $\m f :\mathbb{R}^n\times \mathbb{R}^m\rightarrow \mathbb{R}^g$ in \eqref{eq:gen_dynamic_systems} is one-sided Lipschitz continuous in $\mathbf{\Omega}$ satisfying
	\begin{subequations} 
		\begin{align}
	{	\vphantom{\left(\frac{v_f}{l}\right)} \langle\m G(\m f(\m x,\m u)-\m f(\hat{\m x},\m u)),\m x-\hat{\m x}\rangle\leq \gamma_s \norm{\m x - \hat{\m x}}_2^{2}}
		\end{align}
		for all $(\m x, \m u), (\hat{\m x}, \m u)\in \mathbf{\Omega}$ with
		\begin{align}\label{eq:osl_bound_for_gamma_s}
\hspace{-0.4cm}	{	\gamma_s = \left(\max_{(\m x, \m u)\in \m \Omega} \sum_{i, j\in \mbb{I}(g)} \abs{\sum_{k\in \mbb{I}(g)} G_{(i,k)} \dfrac{\partial f_k}{\partial x_j}(\m x, \m u)}^2\hspace{0.05cm}\right)^{\hspace{-0.05cm}\!1/2}.}
		\end{align}
	\end{subequations}
\end{theorem}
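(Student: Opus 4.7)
The plan is to derive the closed-form bound in Theorem~\ref{cor:osl_bounds} directly from Proposition~\ref{prs:osl_bounds}, by relaxing the spectral quantity that appears in \eqref{eq:one_sided_Lipschitz_upper} to a Frobenius-norm quantity that can be written componentwise in closed form. Proposition~\ref{prs:osl_bounds} already gives the one-sided Lipschitz inequality with constant $\bar{\gamma}$ equal to the supremum over $\mathbf{\Omega}$ of $\lambda_{\max}\!\big(\tfrac{1}{2}(\mathbf{\Xi}+\mathbf{\Xi}^{\top})\big)$. Since any $\gamma_s \geq \bar{\gamma}$ also satisfies the OSL inequality, it suffices to show that the right-hand side of \eqref{eq:osl_bound_for_gamma_s} is an upper bound for $\bar{\gamma}$.

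The first step will be a purely matrix-theoretic lemma: for an arbitrary real square matrix $\mat{M}$ with symmetric part $\mat{S}=\tfrac{1}{2}(\mat{M}+\mat{M}^{\top})$, one has the chain
\begin{equation*}
\lambda_{\max}(\mat{S}) \;\leq\; \|\mat{S}\|_2 \;\leq\; \|\mat{S}\|_F \;\leq\; \|\mat{M}\|_F.
\end{equation*}
The first two inequalities are standard (spectral norm bounded by Frobenius norm), and the last follows from the orthogonal decomposition $\mat{M}=\mat{S}+\mat{A}$ into symmetric and antisymmetric parts, which gives $\|\mat{M}\|_F^2=\|\mat{S}\|_F^2+\|\mat{A}\|_F^2$. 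Applying this to $\mat{M}=\mathbf{\Xi}(\m x,\m u)$ pointwise on $\mathbf{\Omega}$ yields
\begin{equation*}
\lambda_{\max}\!\Big(\tfrac{1}{2}\big(\mathbf{\Xi}(\m x,\m u)+\mathbf{\Xi}^{\top}(\m x,\m u)\big)\Big) \;\leq\; \|\mathbf{\Xi}(\m x,\m u)\|_F.
\end{equation*}

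Next I will substitute the definition of the entries $\Xi_{(i,j)}$ from \eqref{eq:one_sided_Lipschitz_matrix}. By definition of the Frobenius norm,
\begin{equation*}
\|\mathbf{\Xi}(\m x,\m u)\|_F^2 \;=\; \sum_{i,j\in\mbb{I}(g)} |\Xi_{(i,j)}(\m x,\m u)|^2 \;=\; \sum_{i,j\in\mbb{I}(g)} \Big|\sum_{k\in\mbb{I}(g)} G_{(i,k)}\tfrac{\partial f_k}{\partial x_j}(\m x,\m u)\Big|^2.
\end{equation*}
Taking the supremum over $(\m x,\m u)\in\mathbf{\Omega}$ on both sides (the supremum is attained since $\m f$ has continuous partial derivatives on the compact set $\mathbf{\Omega}$, making the right-hand side continuous in $(\m x,\m u)$) and then taking the square root gives exactly the expression for $\gamma_s$ in \eqref{eq:osl_bound_for_gamma_s}, with $\gamma_s \geq \bar{\gamma}$.

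The final step is to invoke Proposition~\ref{prs:osl_bounds}: since the OSL inequality holds with $\bar{\gamma}$ on its right-hand side, it also holds with the larger constant $\gamma_s$, which completes the proof. No step is genuinely hard here; the only subtlety to watch is the direction of the inequality in the symmetric-part bound (one must not confuse $\lambda_{\max}(\mat{S})$ with $|\lambda_{\max}(\mat{S})|$, but since $\lambda_{\max}(\mat{S})\leq|\lambda_{\max}(\mat{S})|\leq \|\mat{S}\|_2$, the chain goes through in the correct direction for an upper bound). The resulting bound is generally looser than $\bar{\gamma}$ from \eqref{eq:one_sided_Lipschitz_upper}, but has the decisive advantage of being a closed-form polynomial/algebraic expression in $\m x,\m u$, so it can be fed directly into the global maximization solvers discussed in the subsequent section.
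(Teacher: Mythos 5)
Your proof is correct. The paper defers its own proof of this theorem to \cite{nugroho2019tac}, so there is no in-text argument to compare against; but your route---taking the one-sided Lipschitz inequality with constant $\bar{\gamma}$ from Proposition~\ref{prs:osl_bounds} and dominating $\lambda_{\mathrm{max}}\big(\tfrac{1}{2}(\mathbf{\Xi}+\mathbf{\Xi}^{\top})\big)$ pointwise by $\|\mathbf{\Xi}\|_F$ through the chain $\lambda_{\mathrm{max}}(\mat{S})\le\|\mat{S}\|_2\le\|\mat{S}\|_F\le\|\mat{M}\|_F$, with the last step justified by the Frobenius-orthogonality of the symmetric/antisymmetric decomposition---is the natural derivation of \eqref{eq:osl_bound_for_gamma_s} and every step, including the attainment of the maximum by continuity of the partial derivatives on the compact set $\mathbf{\Omega}$, is sound.
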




Another less straightforward approach---that is potentially less conservative approach to compute $\gamma_s$ than \eqref{eq:osl_bound_for_gamma_s}---is to make use of the \textit{Gershgorin's circle theorem}.
The next proposition recapitulates this approach to compute an upper bound for the greatest eigenvalue of a symmetric matrix.
\vspace{-0.1cm}
\begin{myprs}[From \cite{barany2017gershgorin}]\label{prs:gershgorin_max_eigenvalue}
	For any symmetric matrix $\m \Psi \in \mbb{S}^n$, the following inequality holds
	\begin{align}
	\lambda_{\mathrm{max}}\left(\m \Psi \right) \leq \max_{i\in \mbb{I}(n)}\left(\Psi_{(i,i)} + \sum_{j \in\mbb{I}(n)\setminus i}\abs{\Psi_{(i,j)}}\right).\label{prs:gershgorin_inequality_max_eigenvalue}
	\end{align}
\end{myprs}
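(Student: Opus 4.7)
The plan is to invoke Gershgorin's disc theorem, which is the underlying localization result the proposition is a corollary of. First I would observe that since $\m \Psi$ is symmetric and real, its spectrum is contained in $\mbb{R}$, so $\lambda_{\mathrm{max}}(\m \Psi)$ is a well-defined real eigenvalue attained by some nonzero real eigenvector $\m v \in \mbb{R}^n$.

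Next I would derive the Gershgorin bound directly, rather than quoting it as a black box, since it is short. Choose an index $i^\star \in \mbb{I}(n)$ for which $|v_{i^\star}| = \max_{j} |v_j|$; this quantity is strictly positive because $\m v \neq \m 0$. Extracting the $i^\star$-th component of the eigenvalue equation $\m \Psi \m v = \lambda_{\mathrm{max}}(\m \Psi)\, \m v$ gives $\bigl(\lambda_{\mathrm{max}}(\m \Psi) - \Psi_{(i^\star,i^\star)}\bigr) v_{i^\star} = \sum_{j \in \mbb{I}(n)\setminus i^\star} \Psi_{(i^\star,j)} v_j$. Dividing by $v_{i^\star}$, applying the triangle inequality, and using $|v_j|/|v_{i^\star}| \leq 1$ for every $j$, I obtain $|\lambda_{\mathrm{max}}(\m \Psi) - \Psi_{(i^\star,i^\star)}| \leq \sum_{j \in \mbb{I}(n)\setminus i^\star} |\Psi_{(i^\star,j)}|$.

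Finally, I would drop the absolute value on the left by noting that an upper bound $a \leq b$ follows from $|a| \leq b$, yielding $\lambda_{\mathrm{max}}(\m \Psi) \leq \Psi_{(i^\star,i^\star)} + \sum_{j \in \mbb{I}(n)\setminus i^\star} |\Psi_{(i^\star,j)}|$. Since this bound holds for the particular index $i^\star$ associated with the extremal eigenvector, it is certainly bounded above by the maximum of the same expression over all $i \in \mbb{I}(n)$, which is exactly the right-hand side of \eqref{prs:gershgorin_inequality_max_eigenvalue}. I do not anticipate any genuine obstacle here; the only subtlety worth flagging is the need to pick the index $i^\star$ corresponding to the component of $\m v$ of largest magnitude so that the ratio bound $|v_j|/|v_{i^\star}| \leq 1$ holds, and to note that symmetry of $\m \Psi$ is what guarantees $\lambda_{\mathrm{max}}(\m \Psi)$ is real so that the inequality on absolute values can be converted into a one-sided bound without complex considerations.
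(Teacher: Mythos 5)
Your argument is correct and is the classical proof of Gershgorin's disc theorem specialized to the largest eigenvalue of a real symmetric matrix; the index choice $i^\star$ maximizing $|v_{i^\star}|$ and the passage from $|a|\leq b$ to $a\leq b$ are exactly the right points to flag. The paper itself offers no proof, importing the inequality directly from the cited reference, so your derivation supplies precisely the standard argument that reference relies on and there is nothing to reconcile.
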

Proposition \ref{prs:gershgorin_max_eigenvalue} provides an amenable way which can be used for computing one-sided Lipschitz constant $\gamma_s$ provided that $\m \Psi = \frac{1}{2}\left(\mathbf{\Xi} (\m x,\m u) + \mathbf{\Xi}^{\top} (\m x,\m u)\right)$. That is, we can consider that
\begin{align}
{\gamma_s =  \max_{i\in \mbb{I}(n)}\left(\max_{(\m{x},\m u)\in \mathbf{\Omega}}\left(\Psi_{(i,i)} + \sum_{j \in\mbb{I}(n)\setminus i}\abs{\Psi_{(i,j)}}\right)\hspace{-0.1cm}\right).\label{eq:osl_gershgorin}}
\end{align}
Note that in \eqref{prs:gershgorin_inequality_max_eigenvalue} it is possible for $\lambda_{\mathrm{max}}\left(\m \Psi \right)$ to be nonpositive assuming that $\m \Psi $ is diagonally dominant with $\Psi_{(i,i)}\leq 0$ for each $i\in\mbb{I}(n)$.  
In the following theorem, we develop another upper bound for the greatest eigenvalue of any symmetric matrix, which has different form compared to  \eqref{prs:gershgorin_inequality_max_eigenvalue}.
\vspace{-0.1cm}
\begin{theorem}\label{thm:vu_max_eigenvalue}
	It holds for any $\m \Psi \in \mbb{S}^n$ that
	\begin{subequations}
		\begin{align}
		\lambda_{\mathrm{max}}\left(\m \Psi \right) \leq \max_{i\in \mbb{I}(n)}\left(\Psi_{(i,i)} +\zeta_n\max_{j\in \mbb{I}(n)\setminus i}\abs{\Psi_{(i,j)}}\right),\label{thm:vu_inequality_max_eigenvalue}
		\end{align}	
		where $\zeta_n \in \mbb{R}_{++}$ is a scalar that depends on the dimension $n$ and is the optimal value of the following maximization problem
		\begin{align}
		\zeta_n = \hspace*{-0.0cm}\max_{\m v\in\mbb{R}^n}\;\; & \frac{1}{v_i}-1 \label{eq:vu_inequality_max_eigenvalue_const_1}\\
		\subjectto  \;\;	 & \sum_{j \in\mbb{I}(n)} \abs{v_i} = 1,\label{eq:vu_inequality_max_eigenvalue_const_2}\\
		&v_i > 0,\;\abs{ v_j}  \leq v_i,\;\forall j\in \mbb{I}(n)\setminus i.\label{eq:vu_inequality_max_eigenvalue_const_3}
		\end{align}
	\end{subequations}
\end{theorem}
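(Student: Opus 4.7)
The plan is to reduce the bound to a direct consequence of the eigenvalue equation $\m\Psi\m v = \lambda_{\max}(\m\Psi)\,\m v$, by choosing a normalization and index that make the feasibility constraints of the $\zeta_n$ program automatic. First, let $\m v\in\mbb{R}^n\setminus\{\m 0\}$ be a real eigenvector of $\m\Psi$ associated with $\lambda_{\max}(\m\Psi)$; such a vector exists by symmetry. Rescale $\m v$ so that $\sum_{j\in\mbb{I}(n)}|v_j|=1$, which is allowed because the eigenequation is homogeneous. Pick an index $i$ attaining $\max_j|v_j|$; then $v_i\neq 0$, and after possibly replacing $\m v$ by $-\m v$ we may assume $v_i>0$. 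With these choices, $\m v$ automatically satisfies $v_i>0$, $|v_j|\leq v_i$ for every $j\in\mbb{I}(n)\setminus i$, and $\sum_j|v_j|=1$, so $\m v$ is feasible for the maximization that defines $\zeta_n$.

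Next, I would read off the $i$-th component of $\m\Psi\m v=\lambda_{\max}(\m\Psi)\,\m v$ and divide through by $v_i>0$ to obtain
\begin{equation*}
\lambda_{\max}(\m\Psi) \;=\; \Psi_{(i,i)} \;+\; \sum_{j\in\mbb{I}(n)\setminus i}\Psi_{(i,j)}\,\frac{v_j}{v_i}.
\end{equation*}
A triangle-inequality step, followed by factoring out the largest off-diagonal magnitude in row $i$, yields
\begin{equation*}
\lambda_{\max}(\m\Psi) \;\leq\; \Psi_{(i,i)} \;+\; \Bigl(\max_{j\in\mbb{I}(n)\setminus i}|\Psi_{(i,j)}|\Bigr)\sum_{j\in\mbb{I}(n)\setminus i}\frac{|v_j|}{v_i}.
\end{equation*}
Using the $\ell^1$ normalization, $\sum_{j\neq i}|v_j|/v_i = (1-v_i)/v_i = 1/v_i-1$. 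Since $\m v$ is feasible for the $\zeta_n$ program, this ratio is at most $\zeta_n$, so
$\lambda_{\max}(\m\Psi)\leq \Psi_{(i,i)} + \zeta_n\max_{j\neq i}|\Psi_{(i,j)}|$ for this particular $i$. Maximizing the right-hand side over $i\in\mbb{I}(n)$ then gives \eqref{thm:vu_inequality_max_eigenvalue}.

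\textbf{Main obstacle:} None of the individual steps is computationally heavy; the real content is spotting that the $\ell^1$ normalization of the eigenvector forces the quantity $\sum_{j\neq i}|v_j|/v_i$ to simplify to exactly $1/v_i-1$, so that the sum produced by the triangle inequality coincides with the objective of the $\zeta_n$ program. The only technical care needed is in verifying feasibility of the rescaled eigenvector: taking $i$ to be an index of maximum absolute value gives $|v_j|\leq v_i$ for $j\neq i$, and the sign flip ensures $v_i>0$, so every constraint of the optimization defining $\zeta_n$ is satisfied. Once these two observations are made, the rest is essentially a one-line application of the triangle inequality.
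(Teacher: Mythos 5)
Your proof is correct. The paper itself does not reproduce a proof of this theorem (it defers to the cited reference \cite{nugroho2019tac}), but your argument is the natural Gershgorin-refinement one clearly intended by the statement: take the $\ell^1$-normalized eigenvector of $\lambda_{\max}(\m\Psi)$, pick $i$ at its largest-magnitude entry so the vector is feasible for the $\zeta_n$ program, and observe that the triangle-inequality residue $\sum_{j\neq i}\abs{v_j}/v_i$ is exactly the objective $1/v_i-1\leq\zeta_n$; every step checks out.
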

In using Theorem \ref{thm:vu_max_eigenvalue}, one need to compute $\zeta_n$ beforehand, which essentially includes solving a nonconvex problem given in \eqref{eq:vu_inequality_max_eigenvalue_const_1}--\eqref{eq:vu_inequality_max_eigenvalue_const_3}. Indeed, this problem can be posed as a convex one, which is summarized next.
\vspace{-0.1cm}
\begin{myprs}\label{prs:computing_zeta}
The nonconvex maximization problem given in \eqref{eq:vu_inequality_max_eigenvalue_const_1}--\eqref{eq:vu_inequality_max_eigenvalue_const_3} is equivalent to the following convex problem
\begin{subequations}\label{eq:computing_zeta}
\begin{align}
v_i^* = \hspace*{-0.0cm}\min_{\m v,\m w\in\mbb{R}^n}\;\; & v_i \label{eq:computing_zeta_1}\\
\subjectto  \;\;	 & \sum_{j \in\mbb{I}(n)} w_i = 1, \label{eq:computing_zeta_2}\\
&v_i > 0,\;w_j  \leq v_i,\;\forall j\in \mbb{I}(n)\setminus i.\label{eq:computing_zeta_3}\\
& \abs{v_i} = w_i, \forall i\in \mbb{I}(n),\label{eq:computing_zeta_4}
\end{align}
where $\zeta_n$ in \eqref{eq:vu_inequality_max_eigenvalue_const_1} can be recovered from $\zeta_n^* = \dfrac{1}{v_i^*}-1$.
\end{subequations}
\end{myprs}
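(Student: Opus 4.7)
The plan is to reduce the maximization of $\tfrac{1}{v_i}-1$ to a minimization of $v_i$, and then remove the non-convexity caused by the absolute values through an auxiliary lifting that turns out to be tight at optimum. First, I would observe that on $v_i>0$ the function $v_i \mapsto \tfrac{1}{v_i}-1$ is strictly decreasing, so the optimizer $\m v^\ast$ of \eqref{eq:vu_inequality_max_eigenvalue_const_1}--\eqref{eq:vu_inequality_max_eigenvalue_const_3} is exactly a minimizer of $v_i$ over the same feasible set, and the optimal values are related by $\zeta_n = \tfrac{1}{v_i^\ast}-1$. This justifies the change of the outer operation from $\max$ to $\min$ in \eqref{eq:computing_zeta_1}.

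Next, I would introduce variables $w_j$ intended to capture $|v_j|$. Replacing each occurrence of $|v_j|$ by $w_j$ in \eqref{eq:vu_inequality_max_eigenvalue_const_2}--\eqref{eq:vu_inequality_max_eigenvalue_const_3} produces the affine constraints $\sum_j w_j=1$, $w_j \le v_i$ for $j\neq i$, together with $v_i>0$; the linking condition \eqref{eq:computing_zeta_4} should then be interpreted as the convex epigraph constraint $w_j \ge |v_j|$ for every $j$, which is equivalent to $-w_j\le v_j\le w_j$. With this interpretation the objective is linear, every inequality is either affine or a sub-level set of a convex function, and the program in \eqref{eq:computing_zeta} is genuinely convex.

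The crux of the proof is a \emph{tightness} argument showing that any minimizer of the relaxed convex program satisfies $w_j=|v_j|$ for every $j$, so that it is also feasible (and optimal) for the original problem. The argument goes by contradiction: suppose $w_j^\ast > |v_j^\ast|$ for some $j\neq i$. Then $w_j^\ast$ can be decreased by a small $\eps>0$ while $v_j^\ast$ is left unchanged, which preserves $w_j\ge|v_j|$ and the bound $w_j\le v_i$. To keep $\sum_j w_j=1$, either some other $w_k$ is increased (which is still feasible as long as the slack in $w_k\le v_i$ permits), or $v_i$ itself must be increased, contradicting optimality. For $j=i$, the strict positivity $v_i>0$ forces $|v_i|=v_i$, so the relation $w_i=v_i$ is automatic once one checks that the active coupling $w_i\ge v_i$ holds at the minimizer.

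I expect the main obstacle to be the bookkeeping of this tightness step, specifically verifying that after any perturbation that slackens $w_j$ one can always reshuffle mass among the other $w_k$'s without breaking the inequalities $w_k\le v_i$ or lifting the objective. A clean way to organize this is to note that the reduced linear program obtained after eliminating the $v_j$'s for $j\neq i$ (since only $|v_j|=w_j$ appears) is of the form $\min\{w_i:\sum_j w_j=1,\;w_j\le w_i,\;w_j\ge 0\}$, whose optimizer is $w_i^\ast=1/n$. This both confirms the convex reformulation \eqref{eq:computing_zeta} and yields the explicit value $\zeta_n=n-1$, from which $\zeta_n=\tfrac{1}{v_i^\ast}-1$ is read off directly, completing the equivalence.
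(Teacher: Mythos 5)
Your reduction of the outer $\max$ to a $\min$ via monotonicity of $v_i\mapsto 1/v_i-1$ on $v_i>0$ is fine, and your closing observation---eliminate the $v_j$, $j\ne i$, through $\abs{v_j}=w_j$, use $v_i=w_i$, and arrive at the linear program $\min\{w_i:\sum_j w_j=1,\ w_j\le w_i\ (j\ne i),\ w_j\ge 0\}$ with optimum $w_i^*=1/n$, hence $\zeta_n=n-1$---is the correct way to establish the equivalence. (The paper itself does not prove this proposition; it only reports $\zeta_n^*=n-1$ from numerical experiments and defers the proof to a reference, so there is no in-paper argument to compare against line by line.)

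However, the middle of your argument has a genuine flaw. You propose to read the coupling $\abs{v_j}=w_j$ as the convex epigraph constraint $w_j\ge\abs{v_j}$ and then prove tightness at the minimizer. That relaxation is \emph{not} tight, because the index $j=i$ is excluded from the constraint $w_j\le v_i$ in \eqref{eq:computing_zeta_3}, so $w_i$ has no upper bound in the relaxed program. Concretely, the point $w_i=1$, $w_j=0$ and $v_j=0$ for $j\ne i$, $v_i=\eps$ is feasible for the relaxed program for every $\eps\in(0,1]$; its infimum is therefore $0$ rather than $1/n$, and it is not attained. Your contradiction step (``decrease $w_j$, reshuffle mass, or else $v_i$ must increase'') misses the escape route of dumping the freed mass onto $w_i$, which costs nothing, and your claim that $w_i=v_i$ is ``automatic'' from $w_i\ge v_i$ is precisely the point that fails. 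The fix is to drop the relaxation entirely and argue as in your final paragraph: under the exact equality $\abs{v_j}=w_j$, the variables $v_j$ for $j\ne i$ are pure slack (any $w_j\ge 0$ is realizable by taking $v_j=w_j$), and $v_i>0$ forces $v_i=\abs{v_i}=w_i$, so the program collapses to the LP in $w$ alone, whose convexity and optimal value $1/n$ are immediate; this is what makes the statement's phrase ``equivalent to a convex problem'' legitimate despite \eqref{eq:computing_zeta_4} being a nonconvex constraint as literally written. (Minor point: \eqref{eq:vu_inequality_max_eigenvalue_const_2} and \eqref{eq:computing_zeta_2} contain index typos, $\sum_j\abs{v_i}$ and $\sum_j w_i$ in place of $\sum_j\abs{v_j}$ and $\sum_j w_j$; you corrected these silently, which is the intended reading.)
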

Notice that problem described in \eqref{eq:computing_zeta} can be solved using any convex programming solvers. In fact, and after solving it for various values of $n$, we observe that the optimal solution is $\zeta_n^* = n - 1$. Readers are referred to \cite{nugroho2019tac} for the proofs.

\vspace{-0.2cm}

\subsection{Quadratic Inner-Boundedness Parameterization }\label{ssec:qib}


The following theorem summarizes our result for computing constants that characterize QIB function sets through posing the problem again as a maximization problem.
\vspace{-0.1cm}
\begin{theorem}\label{prs:quadratic_inner_bounded}
	The nonlinear function $\m f :\mathbb{R}^n\times \mathbb{R}^m\rightarrow \mathbb{R}^g$ in \eqref{eq:gen_dynamic_systems} is locally quadratically inner-bounded in $\mathbf{\Omega}$ such that for any $(\m x, \m u), (\hat{\m x}, \m u)\in \mathbf{\Omega}$ the following holds
	\begin{subequations}\label{eq:quadratic_inner_bounded_theorem}
		\begin{align}
		&\langle\m G(\m f(\m x,\m u)-\m f(\hat{\m x},\m u)),\m G(\m f(\m x,\m u)-\m f(\hat{\m x},\m u))\rangle\leq \nonumber \\
		& \;\gamma_{q1} \norm{\m x - \hat{\m x}}_2^{2}+\gamma_{q2}\langle\m G(\m f(\m x,\m u)-\m f(\hat{\m x},\m u)),\m x-\hat{\m x}\rangle, \label{eq:quadratic_inner_bounded}
		\end{align}
		where for $\epsilon_1,\epsilon_2\in \mbb{R}_{+}$, $\gamma_{q2} = \epsilon_2-\epsilon_1$ and $\gamma_{q1}$ is specified as
		\begin{align}
	{	\gamma_{q1} = \hspace{-0.0cm}\epsilon_1\bar{\gamma}-\epsilon_2\barbelow{\gamma}+\max_{(\m{x},\m u)\in  \mathbf{\Omega}}\hspace{-0.0cm}\sum_{i\in\mbb{I}(n)}\hspace{-0.0cm}\norm{\nabla_{\hspace{-0.05cm}x} \hspace{0.04cm}\xi_i(\m x,\m u)}_2^2,\hspace{-0.00cm} \label{eq:gamma_q1_qib}}
		\end{align}
		where $\xi_i(\m x,\m u) := \sum_{j\in\mbb{I}(g)} G_{(i,j)}f_j(\m x, \m u)$ for $i\in\mbb{I}(n)$ and
		$\bar{\gamma}$ and $\barbelow{\gamma}$ are the optimal values of \eqref{eq:one_sided_Lipschitz_upper} and \eqref{eq:one_sided_Lipschitz_lower}.
	\end{subequations}
\end{theorem}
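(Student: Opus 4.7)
The plan is to reduce the QIB inequality to a pointwise gradient bound on the vector field $\m\xi(\m x, \m u)$ whose $i$-th component is $\xi_i(\m x, \m u) := \sum_{j\in\mbb{I}(g)} G_{(i,j)} f_j(\m x, \m u)$. Observe that $\m\xi(\m x, \m u) = \m G \m f(\m x, \m u)$, so that $\m G(\m f(\m x, \m u) - \m f(\hat{\m x}, \m u)) = \m\xi(\m x, \m u) - \m\xi(\hat{\m x}, \m u)$, and the Jacobian $J_{\m\xi}(\m x, \m u)$ has rows $\nabla_{\hspace{-0.05cm}x} \xi_i^\top$ with squared Frobenius norm $\|J_{\m\xi}\|_F^2 = \sum_{i\in\mbb{I}(n)} \|\nabla_{\hspace{-0.05cm}x} \xi_i\|_2^2$---exactly the quantity appearing in the objective of \eqref{eq:gamma_q1_qib}.

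For the key norm bound I would apply the fundamental theorem of calculus along the segment joining $\hat{\m x}$ and $\m x$, which lies in $\mathbfcal{X}$ by convexity, giving $\m\xi(\m x, \m u) - \m\xi(\hat{\m x}, \m u) = \int_0^1 J_{\m\xi}(\hat{\m x} + t(\m x - \hat{\m x}), \m u)(\m x - \hat{\m x})\, dt$. Jensen's inequality applied to $\|\cdot\|_2^2$, together with the elementary estimate $\|Jv\|_2^2 \leq \|J\|_F^2 \|v\|_2^2$ (which follows row-by-row from Cauchy--Schwarz), and then extending the supremum from the segment to all of $\mathbf{\Omega}$ would yield
\begin{align*}
\|\m G(\m f(\m x,\m u) - \m f(\hat{\m x},\m u))\|_2^2 \leq \max_{(\m y, \m u)\in\mathbf{\Omega}} \sum_{i\in\mbb{I}(n)} \|\nabla_{\hspace{-0.05cm}x} \xi_i(\m y, \m u)\|_2^2 \cdot \|\m x - \hat{\m x}\|_2^2.
\end{align*}

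The remaining step is the algebraic manipulation that activates $\bar{\gamma}$ and $\barbelow{\gamma}$ from Proposition \ref{prs:osl_bounds}. Writing $\gamma_{q2} = \epsilon_2 - \epsilon_1$ with $\epsilon_1, \epsilon_2 \geq 0$ and setting $\m\phi := \m G(\m f(\m x,\m u) - \m f(\hat{\m x},\m u))$, the target QIB inequality rearranges to $\|\m\phi\|_2^2 + \epsilon_1\langle\m\phi, \m x - \hat{\m x}\rangle - \epsilon_2\langle\m\phi, \m x - \hat{\m x}\rangle \leq \gamma_{q1}\|\m x - \hat{\m x}\|_2^2$. Applying the upper OSL bound $\langle\m\phi, \m x - \hat{\m x}\rangle \leq \bar{\gamma}\|\m x - \hat{\m x}\|_2^2$ to the term with positive coefficient $\epsilon_1$, and the lower OSL bound $\langle\m\phi, \m x - \hat{\m x}\rangle \geq \barbelow{\gamma}\|\m x - \hat{\m x}\|_2^2$ to the term with negative coefficient $-\epsilon_2$, and then combining with the Frobenius-norm bound above, identifies $\gamma_{q1}$ exactly as in \eqref{eq:gamma_q1_qib}.

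The main obstacle I anticipate is sign bookkeeping: $\bar{\gamma}$ must be paired with the $+\epsilon_1$ term and $\barbelow{\gamma}$ with the $-\epsilon_2$ term, not the other way round, so that both contribute in the correct direction to yield a valid upper bound rather than a loose or wrong estimate. Obtaining the tighter form with the maximum \emph{outside} the sum (instead of the weaker $\sum_i \max$) also requires carefully routing the integral through the Frobenius norm \emph{before} invoking the $\mathbf{\Omega}$-supremum. Apart from these details, the proof is essentially one application of the mean-value theorem combined with a linear-algebraic rearrangement using the OSL bounds already established in Proposition \ref{prs:osl_bounds}.
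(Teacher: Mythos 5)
Your argument is correct: the mean-value/Jensen bound $\norm{\m G(\m f(\m x,\m u)-\m f(\hat{\m x},\m u))}_2^2 \leq \gamma_m \norm{\m x-\hat{\m x}}_2^2$ with $\gamma_m$ the maximized Frobenius-norm term, combined with applying the upper OSL bound $\bar{\gamma}$ to the $+\epsilon_1$ inner-product term and the lower bound $\barbelow{\gamma}$ to the $-\epsilon_2$ term, sums exactly to the stated $\gamma_{q1}$ and $\gamma_{q2}$, and your sign bookkeeping is right. The paper defers its own proof to the cited reference, but the additive structure of \eqref{eq:gamma_q1_qib} makes clear that this three-term decomposition is precisely the intended argument, so your proposal is essentially the same approach.
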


\vspace{-0.1cm}
This result---see \cite{nugroho2019tac} for the proof---allows the quadratically inner-bounded constants $\gamma_{q1}$ and $\gamma_{q2}$ to be parameterized with non-negative variables $\epsilon_1$ and $\epsilon_2$, hence giving a degree of freedom that can be useful for observer/controller design. 
\section{Computing OSL and QIB Constants}\label{sec:interval_optimization}
The previous section posed the problem of obtaining OSL and QIB constants as global maximization problems over a constrained set $\m \Omega$. That is, all of the boxed equations given in~\eqref{eq:osl_bound_for_gamma_s},~\eqref{eq:osl_gershgorin}, and~\eqref{eq:gamma_q1_qib} are global maximization problems, constrained by $(\m x, \m u) \in \m \Omega$.

Assuming that $\m \Omega$ is a convex set that includes, for example, upper and lower bounds on all states and control inputs (a valid assumption for an array of dynamic networks modeling infrastructure), all of these maximization problems are most likely nonconvex/nonconcave seeing the gradient of $\m f(\cdot)$ is analytically derived within the optimization. With that in mind, and for a specific nonlinear system, the gradient could potentially produce a concave cost function thereby making the global optimization tractable, as it can be solved via convex programming technique such as interior-point methods \cite{nemirovski2008interior}. 
Considering the more plausible scenario that the cost functions are mostly nonconcave, few approaches can be investigated to solve and compute the constants. We summarize these approaches here, while keeping in mind that this is outside the scope of this paper. The first approach to solve~\eqref{eq:osl_bound_for_gamma_s},~\eqref{eq:osl_gershgorin}, and~\eqref{eq:gamma_q1_qib} is through deterministic, global optimization methods such as inner and outer approximation \cite{tuy1998convex}, cutting-plane \cite{floudas2001encyclopedia}, and branch-and-bound methods \cite{Fowkes2013}. For a large scale system, this approach might prove to be intractable.

The second approach is manifested through using randomized sampling through generating a large number of low discrepancy sequences (LDS) and samples for $(\m x, \m u)$ inside $\m \Omega$; see \cite{Nugroho2019} for an example. This is then followed by evaluating~\eqref{eq:osl_bound_for_gamma_s},~\eqref{eq:osl_gershgorin}, and~\eqref{eq:gamma_q1_qib} and subsequently computing the values for constants $\gamma_{s,q1,q2}$ that maximize the corresponding objective functions inside $\m \Omega$.
The third approach is to use interval-based optimization methods. This approach gained momentum recently, through studies that showcased the potential of interval arithmetic in dealing with nonconvex optimization problems; see the studies \cite{van2004termination,markot2006new,Ratschek1991}. We have pursued this particular approach and developed it further for vector valued global maximization in our recent work \cite{nugroho2019tac}. Numerical tests include some preliminary results for this approach, which are presented next.

\vspace{-0.0cm}

\section{An Illustrative Example}\label{sec:numerical_tests}
We consider the dynamics a moving object in $2-$D plane, described in
\cite{Abbaszadeh2010,Abbaszadeh2013} by 
\begin{align}\label{eq:state_space_mov_obj_simple}
\hspace{-0.4cm}\dot{\m x} &= \bmat{1&-1\\ 1& 1}\m x + \bmat{-x_1(x_1^2+x_2^2) \\ -x_2(x_1^2+x_2^2)},\;\;
 y  = \bmat{0 & 1}\m x.
\end{align}
Notice that the nonlinear mapping $\m f:\mbb{R}^2\rightarrow\mbb{R}^2$ of \eqref{eq:state_space_mov_obj_simple} can be also expressed as $\m f(\m x) = -\m x\norm{\m x}^2_2$. 

\subsection{Addressing Erroneous Analytical Parameterization}\label{ssec:correction}

\vspace{-0.15cm}

Unfortunately, the numerical section of \cite{Abbaszadeh2013,zhang2012full} contains some mistakes. The study \cite{Abbaszadeh2013} considers the system described in \eqref{eq:state_space_mov_obj_simple} with domain of interest $\m \Omega = \left\{\m x\in \mbb{R}^2\,|\, \norm {\m x}_2 \leq r\right\}$ where $r \geq 0$. Albeit it is true that such $\m f(\cdot)$ is globally OSL with $\gamma_s = 0$, the claim that $\m f(\cdot)$ is locally QIB in $\m \Omega$ given 
\begin{align}\label{eq:qib_wrong}
\begin{split}
&r = \min \left(\sqrt{-\tfrac{\gamma_{q2}}{4}},\sqrt[4]{\gamma_{q1}+\tfrac{\gamma_{q2}^2}{4}}\right),\; \gamma_{q2}\leq 0,\;\\ &\gamma_{q1}+\tfrac{\gamma_{q2}^2}{4} > 0,
\end{split}
\end{align} 
is incorrect. More specifically, \cite{Abbaszadeh2013} consider the values $\gamma_{q1} = -200$  and $\gamma_{q2} = -141$, which according to \eqref{eq:qib_wrong} gives $r = 5.9372$. Now take the two points $\m x = [1\;\,0]^\top$ and $\hat{  \m x}= \m 0$. One can verify that $\m x,\hat{  \m x}\in\m \Omega$. The left-hand side of \eqref{eq:qib_def} is equal to
\begin{align*}
\norm {\m f(\m x)-\m f(\hat{  \m x})}_2^2 = \norm {\m f(\m x)}_2^2 = \norm {-\m x\norm{\m x}^2_2}_2^2 = 1.
\end{align*}  
If $\m f$ were QIB, we would get
\begin{align*}
1 &\leq \gamma_{q1} \norm{\m x - \hat{\m x}}_2^{2} +\gamma_{q2}\langle\m f(\m x)-\m f(\hat{\m x}),\m x-\hat{\m x}\rangle \nonumber \\
&=-200 \norm{\m x}_2^{2} + (-141)\langle-\m x\norm{\m x}^2_2,\m x\rangle = -59,
\end{align*}
a contradiction. This shows that the conditions \eqref{eq:qib_wrong} do not imply that $\m f(\cdot)$ is QIB. Unfortunately, the same condition is also used in several works on observer design procedure for NDS with OSL and QIB nonlinearities; see for example \cite{zhang2012full,Zhang2012iet,HUANG20173305,ahmad2015observer,AHMAD2016903}. 
For instance, in \cite{zhang2012full}, the values $\gamma_{q1} = -99$  and $\gamma_{q2} = -100$ are chosen, giving $r = 5$. For $\m x = [0.5\;\,0]^\top$ and $\hat{  \m x}= \m 0$, one can get a similar contradiction. 

A further analysis reveals that it is necessary to have $\gamma_{q1} \geq 0$, given the same set $\m \Omega$.  
To proceed, take $\m x = [a\;\,0]^\top$ and $\hat{  \m x}= \m 0$ with $0 <|a| \leq r$ and then evaluate the following expressions related to QIB condition
\begin{subequations} \label{eq:qib_example}
	\begin{align}
	&\norm {\m f(\m x)-\m f(\hat{  \m x})}_2^2 = \norm {\m f(\m x))}_2^2 = \norm {-\m x\norm{\m x}^2_2}_2^2 = a^6 \label{eq:qib_example_1}\\
	&\gamma_{q1} \norm{\m x - \hat{\m x}}_2^{2} +\gamma_{q2}\langle\m f(\m x)-\m f(\hat{\m x}),\m x-\hat{\m x}\rangle \nonumber \\
	&\qquad = \gamma_{q1} \norm{\m x }_2^{2}  +\gamma_{q2}\langle\m f(\m x),\m x\rangle  = a^2\gamma_{q1}-a^4\gamma_{q2}.  \label{eq:qib_example_2}
	\end{align}
\end{subequations}
If $\m f(\cdot)$ is QIB, then it follows from \eqref{eq:qib_example} that
\begin{align*}
a^6 \leq  a^2\gamma_{q1}-a^4\gamma_{q2}.
\end{align*}
We can then divide the above by $a^2$ to deduce
\begin{align*}
a^4 \leq \gamma_{q1}-a^2\gamma_{q2},
\end{align*}
and take the limit $a\to 0$ on both sides to get $0 \leq \gamma_{q1}$, finishing the proof of our claim. This means that in particular 
$2\gamma_{q1} +\abs{\gamma_{q2}}^2 \geq 0,$
and therefore this example is compatible with the statement of Theorem \ref{thm:new_qib_lip}.

Correct conditions to ensure $\m f(\cdot)$ is QIB are as follows:
\begin{align}
2r^2 &\leq -\tfrac{\gamma_{q2}}{2},\;\;r^4 \leq \gamma_{q1} -{\gamma_{q2}}r^2,\;\;\gamma_{q1} \geq 0,\;\; \gamma_{q2} < 0. \label{eq:sufficient}
\end{align}
For example, $\gamma_{q1} = 100$, $\gamma_{q2} = -100$, and $r = 5$ satisfy \eqref{eq:sufficient} and hence are sufficient to ensure that $\m f(\cdot)$ is QIB in $\m \Omega$. As a result, further investigation is required to correctly evaluate the observer designs developed in \cite{Abbaszadeh2013,zhang2012full,HUANG20173305,ahmad2015observer,AHMAD2016903,Zhang2012iet} using the revised QIB constants computed from \eqref{eq:sufficient}. 

\subsection{Numerical Parameterization and State Estimation}\label{ssec:simulation}
This section showcases the proposed methodologies for parameterizing a nonlinear dynamics of a moving object into OSL and QIB. All simulations are performed using {MATLAB} R2017b.
YALMIP's \cite{Lofberg2004} optimization package together with MOSEK's \cite{andersen2000mosek} solver
are used to solve the SDP.
 
In this example, we use $\m \Omega = \left\{\m x\in \mbb{R}^2\,|\, x_i\in [-r,r]\right\}$ where $r = 5$. This numerical example focuses on \textit{(a)} finding both OSL constant $\gamma_s$ and QIB constants $\gamma_{q1}$ and $\gamma_{q2}$ that are \textit{(b)} useful for observer design of the nonlinear system~\eqref{eq:state_space_mov_obj_simple}. To that end, first we attempt to find $\gamma_s$ using the method given in Proposition \ref{prs:gershgorin_max_eigenvalue}.  Matrix $\m \Xi (\m x)$ for this nonlinearity is given as
\begin{align*}
\m \Xi (\m x) = \bmat{-3x_1^2-x_2^2 & -2x_1x_2 \\ -2x_1x_2 & -x_1^2-3x_2^2}.
\end{align*} 
Since $\m \Xi (\m x)$  in the above example is already symmetric, then according to  \eqref{prs:gershgorin_inequality_max_eigenvalue}, $\gamma_s$ can be computed from using \eqref{eq:osl_gershgorin}.
The \textit{interval-based algorithm}, which uses interval arithmetic and branch-and-bound (BnB) routines, computes the tightest interval containing upper and lower bounds for the the objective function of \eqref{eq:osl_gershgorin}. In particular, this algorithm splits the set $\m \Omega$ into smaller subsets and removes the ones that positively do not contain any maximizers. 
Upon running this algorithm, we find that the value of $\gamma_s$ is decreasing to a value near zero, showing that the OSL constant for this system is zero. This results corroborates the fact that, as proven in \cite{Abbaszadeh2010}, the system is indeed globally one-sided Lipschitz with an analytical OSL constant $\gamma_s = 0$. 

Next, we determine the quadratically inner-bounded constant by solving the following problems using interval-based algorithm
	\begin{align*}
	\barbelow{\gamma} &= \min_{i\in\{1,2\}}\left(\min_{\m{x}\in \mathbf{\Omega}}\left(\m \Xi_{(i,i)} -\sum_{j \in\{1,2\}\setminus i}\abs{\m \Xi_{(i,j)}}\right)\hspace{-0.1cm}\right) \\ 
	\gamma_m &= \max_{\m{x}\in \mathbf{\Omega}}\sum_{i\in \{1,2\}}\norm{\nabla_{\hspace{-0.05cm}x} f_i(\m{x})}_2^2. 
	\end{align*}
Solving the above optimization, we obtain $\barbelow{\gamma} = -150$ and $\gamma_m = 2.5\times10^4$. According to Proposition \ref{prs:quadratic_inner_bounded}, for any $\epsilon_1,\epsilon_2\in \mbb{R}_{+}$, the quadratically inner-bounded constant can be constructed as $\gamma_{q2} = \epsilon_2-\epsilon_1$ and $\gamma_{q1} = \epsilon_1\gamma_s -\epsilon_2\barbelow{\gamma}+\gamma_m$. By setting the constants $\epsilon_1 = 10^5$ and $\epsilon_2 = 10^{-1}$, we obtain $\gamma_{q1} = 25015$ and $\gamma_{q2} = -9999.89$. To test the applicability of the computed one-sided Lipschitz and quadratically inner-bounded constants for state estimation, we implement an observer developed in \cite{zhang2012full} using the computed constants. Given the solution to the SDP \eqref{eq:LMI_osl_qib_obs},  we are successfully able to obtain converging estimation error. Fig. \ref{Fig:mov_obj_1c} depicts the corresponding trajectories.

\begin{figure}[t]
	\centering
	\hspace{-0.3cm}
{\includegraphics[scale=0.4]{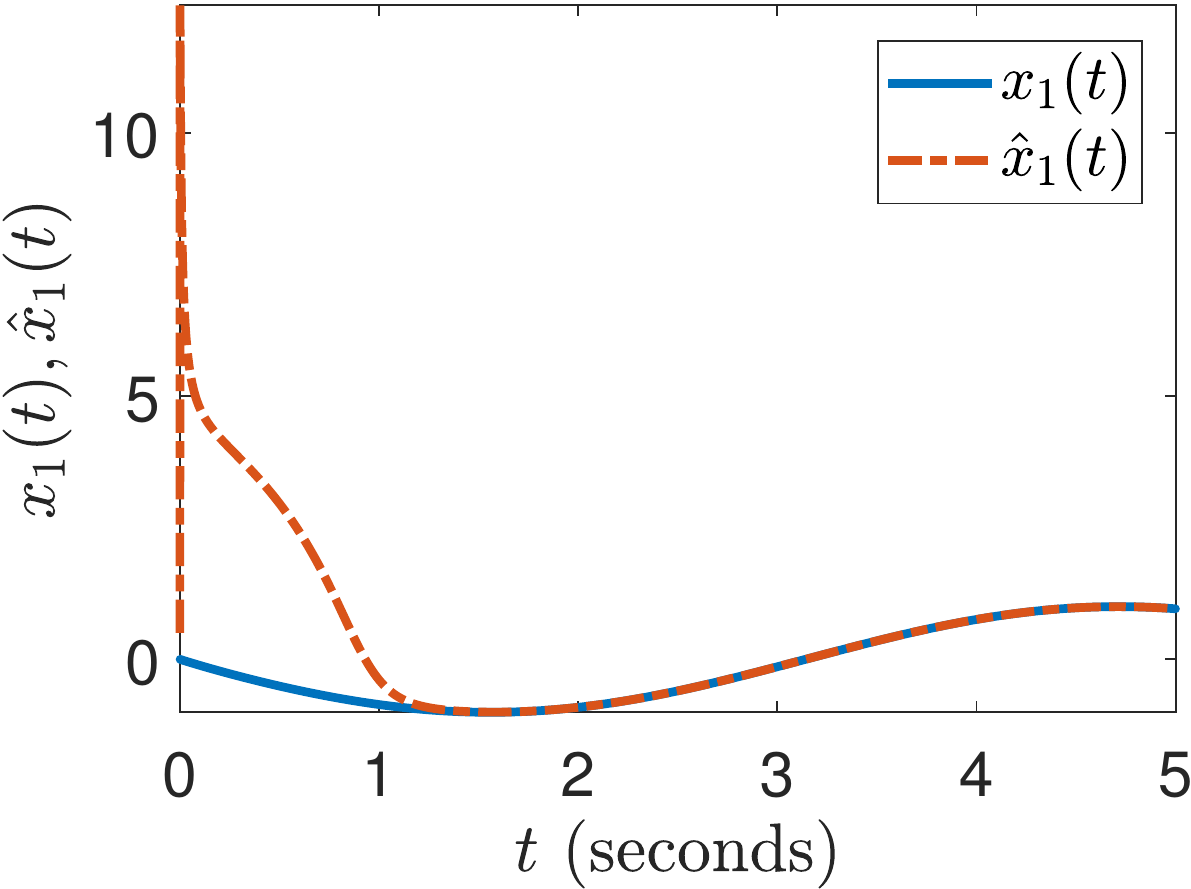}\label{Fig:mov_obj_1c}}{\vspace{-0.1cm}}\hspace{-0.0em}
	\vspace{-0.1cm}
	\caption{Numerical test results for dynamics of a moving object: trajectories of actual $x_1(t)$ and estimated $\hat{x}_1(t)$ state.}
	\vspace{-0.554cm}
\end{figure} 


\section{Conclusions and Acknowledgments}\label{sec:conclusion}
In this paper, we present a new understanding for the relation between Lipschitz continuous and QIB function sets. Our findings state that QIB implies Lipschitz continuous, which suggests that Lipschitz continuous and QIB share the same function sets. We also present numerical methods to compute the corresponding constants for OSL and QIB, posed as global maximization problems. Numerical results demonstrate the applicability of the proposed approach to compute these constants for practical observer designs.

Finally, we would like to acknowledge the editor and the four reviewers who provided thoughtful suggestions and constructive criticism. Specifically, a reviewer's comments motivated the developments of Section~\ref{ssec:correction} and we are grateful for that.

%
\bibliographystyle{IEEEtran}	\bibliography{IEEEabrv,bib_file}


\end{document}